\newcommand{\oper}[1]{\operatorname{#1}}
\newtheorem{theorem}{Theorem}[section]
\newtheorem{lemma}[theorem]{Lemma}
\newtheorem{e-proposition}[theorem]{Proposition}
\newtheorem{e-definition}[theorem]{Definition\rm}
\begin{document}

\title{A Density Result for Real Hyperelliptic Curves}

\author{Brian Lawrence}

\date{April 2016}

\maketitle

\begin{abstract}

Let $\{\infty^+, \infty^-\}$ be the two points above $\infty$ on the real hyperelliptic curve $H: y^2 = (x^2 - 1) \prod_{i=1}^{2g} (x - a_i)$.  We show that the divisor $([\infty^+] - [\infty^-])$ is torsion in $\oper{Jac} J$ for a dense set of $(a_1, a_2, \ldots, a_{2g}) \in (-1, 1)^{2g}$.  In fact, we prove by degeneration to a nodal $\mathbb{P}^1$ that an associated period map has derivative generically of full rank. 
\end{abstract}

\section{Introduction}
\label{}

Our goal is to prove a density result, suggested by J-P.\ Serre, relating to the question of when a certain divisor on a real hyperelliptic curve is a torsion divisor.  The key difficulty is to show that a certain Jacobian derivative, relating to integrals on the curve, generically has full rank.  It would suffice to show that the Jacobian has full rank for even a single curve in the family.  We degenerate the curve to a $\mathbb{P}^1$ with nodes, and perform the calculation there.

The question about real hyperelliptic curves arises from the study of Chebyshev-like polynomials for unions of intervals on the real line.  For the connection between Chebyshev polynomials and integrals on hyperelliptic curves, see Chapter 2 of \cite{Bog}.  Our result guarantees the existence of hyperelliptic curves with certain periods rational, which can be used to define Chebyshev polynomials, as in Theorem 2.1 of \cite{Bog}.   As Serre points out, this result can also be used to complete a difficult passage in a paper by Robinson \cite{Rob}: the claim at the end of \S 4 that one can ``vary the intervals a little'' so that ``the heights of the corresponding slits [are] rational multiples of $\pi$'' is in fact equivalent to this density result for hyperelliptic curves.

\section{Prior Work}

Since writing this paper, I have learned that the main result (Theorem \ref{mainthm}) has appeared multiple times in the literature, with different proofs.

The result is Theorem 5 of \cite{Bog2}; additionally, Bogatyrev's paper gives a very explicit geometric description of the moduli space of real hyperelliptic curves and the solutions to Abel's equations.

The result is also proved as Theorem 2.1 of \cite{Tot}, with an application to bounding derivatives of polynomials in terms of their values on an interval.

Bogatyrev and Totik give independent proofs that the Jacobian of Lemma 4.1 of \cite{Law} is surjective at every point of the moduli space.
This is stronger than the result of \cite{Law}, where it is merely shown that the Jacobian is generically surjective.

Additionally, the result appears as the main result of \cite{Peh}, in the following form: any finite union $E$ of real disjoint intervals
can be approximated by a set of the form $E' = \mathcal{T}^{-1}([-1, 1])$, with $\mathcal{T}$ a polynomial.
The set $E'$ is obtained constructively by continuous deformation of a minimal polynomial.

I'd like to thank Andrey Bogatyrev for bringing these results to my attention.

\section{The Setup}

Fix a positive integer $g$ and let $\mathcal{U}$ denote the set of $2g$-tuples $a = (a_1, a_2, \ldots, a_{2g})$ of real numbers satisfying $-1 < a_1 < \cdots < a_{2g} < 1$.  For any $a \in \mathcal{U}$ the curve $P_a$ in $\mathbb{P}^2$ given by
$$y^2 = f(x) := (x^2-1) \prod_i (x - a_i)$$
has one point $\infty$ at infinity, at which it is singular; call its normalization $H_a$.  This is a real algebraic curve; it has two points lying over $\infty$, both defined over $\mathbb{R}$, and distinguished by the sign of $\frac{y}{x^{g+1}}$.  Call them $\infty^+$ and $\infty^-$.  
We say that $a$ (or $H_a$) is of \emph{torsion type} if the image of the divisor
$$[\infty^+] - [\infty^-]$$
in $\oper{Jac} H_a$ is torsion.  (This condition is independent of base field and can be checked after base change to $\mathbb{C}$.)

Our goal is to prove the following theorem.  (The problem, as well as the method of proof, was suggested in a lecture by Serre at Leiden in November 2015.)

\begin{theorem}
\label{mainthm}
The set of elements $a \in \mathcal{U}$ of torsion type is dense in $\mathcal{U}$.
\end{theorem}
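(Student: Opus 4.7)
The plan is to realize the torsion condition as a ``point is torsion in a compact real torus'' condition via an explicit Abel--Jacobi period map $\phi$, to reduce density of torsion-type points to a single full-rank statement on $d\phi$, and to verify that full-rank statement by degenerating to $\mathbb{P}^1$ with nodes.

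First I would set up the period map. Take the standard basis $\omega_j := x^{j-1}\, dx/y$, $1 \le j \le g$, of holomorphic differentials on $H_a$ and, for each $a \in \mathcal{U}$, choose a continuously varying basis $\gamma_1(a), \ldots, \gamma_g(a)$ of real-period cycles --- concretely, the real oval of $H_a(\mathbb{R})$ above each interval $[a_{2i-1}, a_{2i}]$. The matrix $M(a) := \bigl(\int_{\gamma_i(a)} \omega_j\bigr)_{i,j}$ generates the real period lattice $\Lambda(a) \subset \mathbb{R}^g$, and the image of $[\infty^+] - [\infty^-]$ in the compact real torus $T(a) := \mathbb{R}^g/\Lambda(a)$ is
$$
\phi(a) := \left( \int_{\infty^-}^{\infty^+} \omega_j \right)_{j=1}^g \bmod \Lambda(a).
$$
The divisor is of torsion type precisely when $\phi(a)$ is a torsion element of $T(a)$. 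Since $\mathcal{U}$ is simply connected, the family $\{T(a)\}$ trivializes over $\mathcal{U}$, and $\phi$ becomes a real-analytic map $\phi:\mathcal{U}\to T$ with $T\cong(\mathbb{R}/\mathbb{Z})^g$.

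Second, I would reduce to a single-point statement. Torsion is dense in $T$. If $d\phi(a_0)$ has rank $g$ at some $a_0\in\mathcal{U}$, the submersion theorem makes $\phi$ an open map near $a_0$, so the preimage of torsion is dense there; real-analyticity of $\phi$ then makes the maximal-rank locus open and dense in $\mathcal{U}$, giving density of the torsion-type locus in $\mathcal{U}$. It therefore suffices to produce a single $a_0$ at which $d\phi$ is surjective. For this I would degenerate to the boundary of $\mathcal{U}$: let $a_{2i-1},a_{2i}\to c_i$ for $g$ distinct points $c_1<\cdots<c_g$ in $(-1,1)$. The limiting curve is $\mathbb{P}^1$ with $g$ ordinary nodes at the $c_i$, whose generalized Jacobian is $(\mathbb{G}_m)^g$ --- one factor per node. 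The Abel--Jacobi integrals become elementary on the normalization, reducing to logarithms of cross-ratios of $c_1,\ldots,c_g$ and $\pm 1$, and the partial derivatives $\partial\phi/\partial a_i$ can be written down explicitly. I expect the resulting $g\times 2g$ derivative matrix to have rank $g$, with the non-degeneracy reducing to the non-vanishing of a Vandermonde-type determinant in the $c_i$.

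The main obstacle is this last step. As $a_{2i-1},a_{2i}\to c_i$ the cycle $\gamma_i$ is a vanishing cycle, its periods tend to $0$, and the lattice $\Lambda(a)$ collapses, so the trivialization of $T(a)$ used to define $\phi$ must be chosen and tracked carefully before $d\phi$ can be meaningfully evaluated in the limit. Once a suitable normalization is fixed --- for instance, by working with dual cycles whose periods remain bounded, or equivalently by rescaling the $\gamma_i$'s --- the surviving derivative should be an explicit rational expression in $c_1,\ldots,c_g$, and its non-degeneracy a direct calculation.
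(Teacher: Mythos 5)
Your outline follows the same strategy as the paper --- reduce the density claim to surjectivity of the derivative of a real period map at a single point, and then degenerate to a nodal $\mathbb{P}^1$ to compute --- but you have misdiagnosed what happens in the limit, and the misdiagnosis replaces the actual (routine) technical step with a phantom difficulty.

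You assert that as $a_{2i-1},a_{2i}\to c_i$ the cycle $\gamma_i$ is a vanishing cycle, ``its periods tend to $0$, and the lattice $\Lambda(a)$ collapses,'' so that the trivialization of the torus must be renormalized (dual cycles, rescaled $\gamma_i$) before the derivative can be evaluated. This is not what happens. Although $\gamma_i$ is indeed a vanishing cycle in the topological sense (it bounds on the nodal limit), the differentials $\omega_j = x^{j-1}\,dx/\sqrt{f(x)}$ are not a fixed family of holomorphic forms on a fixed curve: they degenerate along with the curve and acquire a simple pole at the node $x=c_i$, because $\sqrt{(x-a_{2i-1})(x-a_{2i})}\to x-c_i$. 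As a result $\int_{\gamma_i}\omega_j$ converges to a residue integral, which is a finite \emph{nonzero} number, not $0$. Concretely, with the partial-fraction basis $\omega_i = dx/\bigl((x-b_i)\sqrt{x^2-1}\bigr)$ the limit of the real period matrix $M$ is the diagonal matrix with entries $2\pi/\sqrt{1-b_i^2}$; $M$ remains invertible, the real lattice does not collapse, and no renormalization is needed. The quantity $M^{-1}v$ extends real-analytically across the degenerate stratum, and its derivative there can simply be computed --- in the paper its $i$-th component is $\arccos(-b_i)/\pi$, whose Jacobian in the $b_i$ is visibly of full rank.

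What does require some care, and what your plan does not really address, is (i) the analytic extension: the defining integrals for $M$ and $v$ are improper real integrals that threaten to degenerate at coinciding endpoints, and one must deform them to contour integrals in $\mathbb{C}$ and invoke a Morera/Fubini argument to see that $M$ and $v$, hence $M^{-1}v$, are real-analytic on a neighborhood $\mathcal{U}'$ of the boundary stratum; and (ii) invertibility of $M$ at intermediate degenerations where only some pairs have collapsed, which uses the residue exact sequence for differentials on the partially nodal curve together with the Hodge decomposition on the normalization. Your remark about ``tracking the trivialization'' gestures at (i), but the point there is elementary contour deformation, not a rescaling to compensate for collapsing periods. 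Once the limit periods are computed correctly, the rest of your reduction --- full rank at one point, lower semicontinuity of rank under real-analyticity, density of $\mathbb{Q}^g$-preimages --- is sound and matches the paper.
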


Recall that we can detect whether $a$ is of torsion type by integrating differentials on $H_a$, as follows.

The global differentials on $H_a$ (i.e.\ the global sections of $\Omega_{H_a/\mathbb{R}}$) form a real vector space $\Omega$ of dimension $g$, whose complexification is the isomorphic to the cotangent space of the $\oper{Jac} (H_a)_\mathbb{C}$.  Explicitly, the global differentials are of the form
$$\frac{p(x) dx}{y},$$
with $p$ a polynomial of degree at most $g-1$.  Suppose
$$\omega_i = \frac{p_i(x) dx}{y},$$
for $1 \leq i \leq g$, form a basis $\mathcal{B}$ for this vector space.  (We will make an explicit choice of $\omega_i$, including the implicit choice of square root, later.)  Then we can compute the \emph{matrix of real periods} $M$ of $H_a$ with respect to $\mathcal{B}$, whose $(i, j)$-th entry is
$$2 \int_{a_{2j-1}}^{a_{2j}} \omega_i.$$
Here $i$ and $j$ each range from $1$ to $g$.  (We may write $M_\mathcal{B}(a)$ to emphasize the dependence of $M$ on $a$ and $\mathcal{B}$.)

The Jacobian of the base change of $H_a$ to $\mathbb{C}$ is a complex torus, which is naturally the quotient of the complexification $\Omega_\mathbb{C}$ by a rank-$2g$ integral lattice $\Lambda$.  This $\Lambda$ contains the rank-$g$ sublattice $\Lambda_\mathbb{R}$ generated (after choice of basis $\mathcal{B}$) by the columns of $M$; it follows that $\Lambda_\mathbb{R}$ is exactly the intersection of $\Lambda$ with the real subspace $\Omega$.

The coordinates of (a lift of) the divisor $[\infty^+] - [\infty^-]$ in this torus are given by the following $g$ integrals:
$$v_i = 2 \int_1^\infty \omega_i.$$
Let $v$ denote the vector of the $v_i$'s.  Note that $v$ in fact lies in the real subspace $\Omega$.  (Again, we may write $v_\mathcal{B}(a)$.)

Now $a$ is of torsion type if and only if some nonzero integral multiple of $v$ lies in the lattice $\Lambda$, or equivalently in $\Lambda_\mathbb{R}$.  Hence we have the following characterization of $a$ of torsion type.

\begin{lemma}
Suppose given some $a \in \mathcal{U}$, and make an arbitrary choice of basis $\mathcal{B}$ for $\Omega$.  Then $a$ is of torsion type if and only if the vector $M^{-1}_\mathcal{B}(a) v_\mathcal{B}(a)$ has all its components rational.
\end{lemma}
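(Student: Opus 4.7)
The lemma is essentially a reformulation of the setup, so the plan is to unwind definitions carefully and identify the one point that needs a brief justification.

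The paragraph preceding the statement already establishes that $a$ is of torsion type if and only if some nonzero integral multiple of $v$ lies in $\Lambda_\mathbb{R}$ (using $v \in \Omega$ and $\Lambda \cap \Omega = \Lambda_\mathbb{R}$). The plan is to express this condition in coordinates with respect to the chosen basis $\mathcal{B}$. By construction $\Lambda_\mathbb{R}$ is the $\mathbb{Z}$-span of the columns of $M_\mathcal{B}(a)$, while $v_\mathcal{B}(a)$ is the coordinate vector of $v$. Thus $n v \in \Lambda_\mathbb{R}$ for some nonzero $n \in \mathbb{Z}$ rewrites as $n v_\mathcal{B}(a) = M_\mathcal{B}(a) w$ for some $w \in \mathbb{Z}^g$, and provided $M_\mathcal{B}(a)$ is invertible this becomes $M^{-1}_\mathcal{B}(a) v_\mathcal{B}(a) \in \frac{1}{n}\mathbb{Z}^g \subset \mathbb{Q}^g$, giving the claimed equivalence (the forward direction producing $w$ from the rationality of the components, after clearing denominators).

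The one substantive step is invertibility of $M_\mathcal{B}(a)$, equivalently the claim that $\Lambda_\mathbb{R}$ has full rank $g$ inside the $g$-dimensional real vector space $\Omega$. I expect this to be the main (minor) obstacle and would argue as follows: $\Lambda \subset \Omega_\mathbb{C}$ is a cocompact lattice, so $\Omega_\mathbb{C}/\Lambda$ is a compact real torus of real dimension $2g$; complex conjugation on $\Omega_\mathbb{C}$ preserves $\Lambda$ and so descends to an involution, whose identity component of the fixed locus is precisely $\Omega/(\Lambda \cap \Omega) = \Omega/\Lambda_\mathbb{R}$. Since this identity component is a connected compact Lie subgroup of a real torus, it is itself a compact torus, forcing $\Lambda_\mathbb{R}$ to span $\Omega$ over $\mathbb{R}$. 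Hence its $g$ generators (the columns of $M$) are $\mathbb{R}$-linearly independent, so $M$ is invertible and the proof is complete. (Alternatively, one notes that the cycles $[a_{2j-1}, a_{2j}]$ are part of a standard symplectic basis of $H_1(H_a(\mathbb{C}), \mathbb{Z})$ and so their period integrals against a basis of $\Omega$ form an invertible matrix.)
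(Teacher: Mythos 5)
Your proof is correct, and the main part (unwinding the definitions) is exactly the argument the paper has in mind: the paper states this lemma with no separate proof, because the paragraph immediately before it — ``$a$ is of torsion type if and only if some nonzero integral multiple of $v$ lies in $\Lambda_\mathbb{R}$'' — is the proof, modulo the invertibility of $M_\mathcal{B}(a)$, which the paper has already asserted by calling $\Lambda_\mathbb{R}$ a ``rank-$g$ sublattice.'' Where you genuinely diverge is in how you justify that invertibility. The paper, when it returns to this point in Section~3 (in the more general setting of $\mathcal{U}'$), invokes the Hodge decomposition for the smooth hyperelliptic curve; this is essentially your parenthetical ``alternatively'' remark, that the cycles around $[a_{2j-1},a_{2j}]$ extend to a symplectic basis of $H_1$ and hence give independent period vectors. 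Your primary argument instead uses the real structure directly: complex conjugation acts on the torus $\Omega_\mathbb{C}/\Lambda$, the Lie algebra of its fixed-point subgroup is $\Omega$, and the identity component $\Omega/(\Lambda\cap\Omega)$ is a closed connected subgroup of a compact torus, hence a compact torus, forcing $\Lambda_\mathbb{R}=\Lambda\cap\Omega$ to span $\Omega$. Both routes are sound; the Hodge/symplectic-basis route is closer to what the paper actually does and extends more readily to the degenerate curves in $\mathcal{U}'$, while your real-torus argument is more self-contained and makes no use of the intersection pairing on $H_1$.
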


We remark in passing that, while $M$ and $v$ both depend on the choice of basis $\mathcal{B}$, the product $M^{-1} v$ appearing in the lemma above does not.  By a slight abuse of notation, we will use $M^{-1} v$ to denote this function
$$M^{-1} v: \mathcal{U} \rightarrow \mathbb{R}^g,$$
without reference to $\mathcal{B}$.

\section{Expanding $\mathcal{U}$}
Let $\mathcal{U}'$ denote the set of tuples $(a_1, a_2, \ldots, a_{2g+2})$ of real numbers such that
$$ -1 < \min (a_1, a_2) \leq \max (a_1, a_2) < \min (a_3, a_4) \leq \cdots \leq \max (a_{2g-1}, a_{2g}) < 1;$$
we equip $\mathcal{U}'$ with the real analytic structure induced from $\mathbb{R}^{2g}$.  Then $\mathcal{U}$ is naturally a subset of $\mathcal{U}'$.

We wish to extend $M$ and $v$ to functions on $\mathcal{U}'$.  As before, we will choose a basis $\mathcal{B}$ of the relevant space of differentials; write the basis elements as
$$\omega_i = \frac{p_i(x) dx}{\sqrt{f(x)}},$$
with each $p_i$ a polynomial of degree at most $g-1$.  (The choice of square root will be specified below.)  We will define $M$ and $v$ with respect to $\mathcal{B}$.

The definition of $M$ is problematic, since the endpoints of the integral
$$2 \int_{a_{2j-1}}^{a_{2j}} \omega_i$$
could coincide.  However this integral, where defined, is equal to a path integral in the complex plane
$$\int_{\gamma_j} \omega_i,$$
where $\gamma_j$ is a loop that goes around the two roots $a_{2j}$ and $a_{2j+1}$.  We now make this idea precise.

In order to choose a square root $\sqrt{f(x)}$, we make $g+2$ branch cuts in the complex plane.  First, cut along the intervals $(- \infty, -1]$ and $[1, \infty)$ of the real line.  This is exactly the locus where $(x^2 - 1)$ is positive real; away from this locus we may specify a square root $\sqrt{x^2-1}$ by requiring that its imaginary part be positive.  Similarly, each quadratic factor $(x-a_{2j-1})(x-a_{2j})$ has a holomorphic square root on the complement of the line segment joining $a_{2j-1}$ to $a_{2j}$; we choose the square root to be positive real for real $x$ outside the interval removed.  (If the two roots $a_{2j-1}$ and $a_{2j}$ coincide we merely puncture the plane at this point.)  The condition on the real parts of the roots implies that the branch cuts are disjoint.  Away from these branch cuts we have specified a choice of square root of $f$.

We define the paths $\gamma_j$ as follows, for $1 \leq j \leq g$.  Each $\gamma_j$ will be a loop that goes once counterclockwise around the branch cut from $a_{2j}$ to $a_{2j+1}$.  Precisely, one may take $\gamma_j$ to be a rectangle; its left-hand side is between $\max (a_{2j-3}, a_{2j-2})$ and $\min (a_{2j-1}, a_{2j})$, and its right-hand side between $\max (a_{2j-1}, a_{2j})$ and $\min (a_{2j+1}, a_{2j+2})$.  (When $j=1$ the left-hand side should be between $a_1 = -1$ and $\min (a_{1}, a_{2})$; and similarly for the right-hand side when $j=g$.)  The lower and upper edges of the rectangle lie in the lower and upper half-planes, respectively.  We also take the lower and upper edges to be symmetric about the real axis; this shows that the periods are real.

We define the matrix of periods to be the matrix with entries
$$M_{ij} = \int_{\gamma_j} \omega_i,$$
where $i$ and $j$ each range from $1$ to $g$.

First, we verify that this matrix $M$ is invertible.  When the roots $a_i$ are distinct, this is a consequence of the Hodge decomposition, applied to our hyperelliptic curve.  In the general case, our condition on the roots $a_i$ implies that $f$ has at most double roots; suppose there are $k$ double roots, and $2g+2-2k$ single roots.  In this case we again take $P_a$ to be the projectivized plane curve $y^2 = f(x)$, which has a singular point $\infty$ at its single intersection with the line at infinity in the projective plane.  We construct $H_a$ to be a ``desigularization of $P_a$ at $\infty$'': our $H_a$ is a cover of $P_a$ such that the map $H_a \rightarrow P_a$ is an isomorphism away from $\infty$.  We adjoin $y / x^{g+1}$ to the coordinate ring of $P_a$ in affine neighborhoods of $\infty$.  The resulting curve $H_a$ has again points $\infty^\pm$ lying over $\infty$, and is smooth at those two points; but $H_a$ retains the $k$ singular points of $P_a$ in the finite part of the projective plane.  Let $C$ be the normalization of $H_a$.  This $C$ is a curve of genus $g-k$, and $H_a$ is obtained from $C$ by glueing $k$ pairs of complex-conjugate points.  The vector space $\Omega$ of real global differentials on $H_a$ fits in an exact sequence
$$0 \rightarrow \Omega^1_C \rightarrow \Omega \rightarrow V_k \rightarrow 0,$$
where $V$ is a $k$-dimensional vector space over $\mathbb{R}$, and $\Omega^1_C$ denotes the matrix of real global differentials on $C$.  The map $\Omega \rightarrow V$ is the residue map.  Specifically, each singular point $x$ of $H_a$ has two preimages $x_1$ and $x_2$ in $C$; a global differential on $H_a$ pulls back to a differential on $C$ with at most simple poles at $x_1$ and $x_2$, and opposite residues at these two poles.  For each of the $k$ singular points $x_1, x_2, \ldots, x_k$, we choose one of its two preimages; the residues at these $k$ points give the map from $\Omega$ to $V$.

We may check invertibility of the period matrix with respect to any basis for $\Omega$.  Let us choose a basis containing $g-k$ elements $\omega_{k+1}, \omega_{k+2}, \ldots, \omega_{g}$ of $\Gamma(\Omega^1_C)$, and then an additional $k$ differentials $\omega_1, \omega_2, \ldots, \omega_k$ such that the pullback of $\omega_i$ to $C$ has nonzero residue around the preimages of $P_i$, but zero residue around the preimages of $P_j$ for $j \neq i$.  Furthermore, permute the indices of the loops $\gamma_j$ so that $\gamma_1, \gamma_2, \ldots, \gamma_k$ are the loops around the singular points.

Then for $1 \leq j \leq k$, the loop $\gamma_j$ lifts to a trivial loop on $C$ that goes around one of the preimages of $P_i$.  So, if $i > k$ then we have
$$M_{ij} = \int_{\gamma_j} \omega_i = 0.$$
In other words, the matrix $M$ is upper-triangular in this basis, so its determinant depends only on the two diagonal blocks.

The block $1 \leq i, j \leq k$ is diagonal with diagonal entries nonzero (given by the residue of $\omega_i$ around a preimage of $P_i$).  The other block has nonzero determinant by the Hodge decomposition applied to the curve $C$.  Hence, the matrix $M$ is invertible.

The definition of $v$ via
$$v_i = 2 \int_{1}^\infty \omega_i$$
remains valid on $\mathcal{U}'$ since $f$ cannot vanish at any $x>1$.

Next we show that $M$ and $v$ are real-analytic as functions on $\mathcal{U}'$, taking for $\mathcal{B}$ the standard basis
$$\left\{ \frac{x^i dx} {\sqrt{f(x)}} \right\}.$$

The integral defining $v$ can be replaced with a contour integral in the complex plane.  Keeping our previous branch cuts, integrate along a path $\gamma$ that starts from infinity in the lower half-plane, passes just to the left of $1$, say through $1-\epsilon$, and goes back to infinity through the upper half-plane.  To avoid integrating along a path that goes to infinity, we perform the change of variables $z = 1/x$.  (Assume $\gamma$ was chosen not to pass through $0$.)  Call $\gamma_0$ the corresponding path in the $z$-plane.  Now the two endpoints of $\gamma_0$ coincide at $0$, but in fact the integrand has different values at the two endpoints since the points are on different branches of our double cover of $\mathbb{P}^1$; to deal with this, split the path $\gamma_0$ into paths $\gamma_1$ (from $0$ to $1/(1-\epsilon)$, through the lower half-plane) and $\gamma_2$ (from $1/(1-\epsilon)$ to $0$, through the upper half-plane).  Since
$$\frac{x^i dx} {\sqrt{f(x)}}$$
is a holomorphic global differential on a double cover of $\mathbb{P}^1$, we may write it as
$$g_e(z) dz,$$
with $g_e$ holomorphic on a neighborhood of $\gamma_e$, for $e$ equal to $1$ or $2$.  Also, each $g_e$ is holomorphic in the coefficients of $f$.  The analyticity of $v$ (and of $M$) now follows from the following well-known lemma.

\begin{lemma}
\label{anal}
Let $\gamma$ be a piecewise-differentiable path in $\mathbb{C}$, and let $U$ be an open set containing $\gamma$.  Suppose we have an open set $D \subseteq \mathbb{C}^n$ and a holomorphic function $h$ on $D \times U$.  Then
$$H(z_1, z_2, \ldots, z_n) = \int_\gamma h(z_1, z_2, \ldots, z_n, w) dw$$
is holomorphic on $D$.
\end{lemma}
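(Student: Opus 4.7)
The plan is to reduce to one complex variable via separate analyticity, and then prove holomorphy in one variable by the standard Morera-plus-Fubini argument. First I would note that it suffices to show $H$ is holomorphic in each variable $z_k$ separately, since a separately holomorphic function on an open subset of $\mathbb{C}^n$ is jointly holomorphic (by Hartogs's theorem, or by Osgood's simpler lemma once continuity of $H$ is known). So without loss of generality $n=1$, and I write $H(z) = \int_\gamma h(z,w)\,dw$ for $z$ in an open set $D \subseteq \mathbb{C}$.

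Continuity of $H$ on $D$ follows from a direct $\varepsilon/\ell(\gamma)$ estimate: given $z_0 \in D$, pick a compact neighborhood $K \subset D$ of $z_0$; then $h$ is uniformly continuous on the compact set $K \times \gamma$, and the bound
$$|H(z) - H(z_0)| \;\le\; \ell(\gamma)\,\sup_{w \in \gamma} |h(z,w) - h(z_0,w)|$$
tends to $0$ as $z \to z_0$.

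To conclude I would apply Morera's theorem. For any closed triangle $T \subset D$, Fubini's theorem --- valid because $h$ is continuous, hence bounded, on the compact set $\partial T \times \gamma$ --- gives
$$\oint_{\partial T} H(z)\,dz \;=\; \int_\gamma \oint_{\partial T} h(z,w)\,dz\,dw \;=\; 0,$$
the inner contour integral vanishing by Cauchy's theorem applied to the holomorphic function $z \mapsto h(z,w)$ on $D$. Morera's theorem then implies $H$ is holomorphic on $D$.

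There is no serious obstacle here: the only steps requiring care are the interchange of integrals in the Morera argument and the reduction from several variables to one. Both are entirely standard, which is no doubt why the paper labels the lemma as ``well-known.''
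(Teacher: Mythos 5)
Your proof is correct and follows essentially the same route as the paper's: reduce to $n=1$ via Osgood's lemma (or Hartogs), then apply Morera together with Fubini to show the contour integral of $H$ around any loop vanishes. The only difference is that you explicitly verify continuity of $H$ (which Osgood's lemma requires and the paper leaves implicit), a worthwhile bit of extra care but not a different argument.
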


\begin{proof}
By Osgood's lemma, it is enough to check that $H$ is holomorphic in the variables $z_i$ taken one at a time, so we may suppose $n=1$ and write $z$ for $z_1$.  By Morera's theorem, it is enough to show that
$$\int_{\gamma '} H(z) dz = 0$$
over any loop $\gamma '$ whose interior is contained in $D$.  But this follows from the holomorphy of $h$, since Fubini's theorem allows one to interchange the order of integration.
\end{proof}

We apply the lemma to our situation as follows.  Fix real numbers $a_i^{(0)}$, and a path $\gamma$, as in the discussion preceding the lemma. We will apply the lemma to the integral
$$\int_{\gamma} \frac{x^j dx} {\sqrt{ (x^2 - 1) \prod_{i=1}^{2g} (x - a_i)}},$$
where we make a choice of square root as discussed above.
In the lemma we take $n = 2g$ with $z_i = a_i$, and
$$h(a_1, \ldots, a_{2g}, x) = \frac{x^j} {\sqrt{ (x^2 - 1) \prod_{i=1}^{2g} (x - a_i)}}.$$
We will take $D$ a neighborhood of $(a_1^{(0)}, \ldots, a_{2g}^{(0)})$ in $\mathbb{C}^n$, and $U$ a neighborhood of $\gamma$ in $\mathbb{C}$, such that $h$ remains holomorphic on $D \times U$ (to which our choice of square root extends).


The lemma shows that our integral is holomorphic in the variables $a_i$ in a neighborhood of an arbitrary $(a_1^{(0)}, \ldots, a_{2g}^{(0)}) \in \mathcal{U}'$; if we restrict $a_i$ to real values, the integral remains real-analytic, as desired.

We have now extended $M$ and $v$ to $\mathcal{U}'$.  As before we find that the product $M^{-1} v$ is independent of $\mathcal{B}$ and gives a real-analytic map from $\mathcal{U}'$ to $\mathbb{R}^g$, which agrees with the function already constructed on $\mathcal{U}$.

\section{An Infinitesimal Criterion}
(Serre gave this argument in his lecture.)

We wish to show that the preimage of $\mathbb{Q}^g$ under $M^{-1} v$ is dense.  Since $\mathcal{U}'$ is open in $\mathbb{R}^{2g}$, we may consider the Jacobian of $M^{-1} v$.  (Here ``Jacobian'' is used in the sense of ``matrix of first partial derivatives,'' not ``Picard scheme.'')

\begin{lemma}
If there is a point $A_0 \in \mathcal{U}'$ at which the Jacobian of $M^{-1} v$ is surjective, then the preimage of $\mathbb{Q}^g$ is dense in $\mathcal{U}'$.
\end{lemma}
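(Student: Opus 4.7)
The plan is to combine the real-analyticity of $F := M^{-1}v$ (established in the preceding section) with the implicit function theorem, upgrading the hypothesis at the single point $A_0$ to a dense open submersive locus by means of the identity principle. A preliminary observation is that $\mathcal{U}'$ is connected: its defining conditions unfold to finitely many strict linear inequalities in the coordinates $a_i$ (for example, $-1 < \min(a_1, a_2)$ is equivalent to the pair of conditions $a_1 > -1$ and $a_2 > -1$, and the separation condition between consecutive pairs unfolds similarly), so $\mathcal{U}'$ is an intersection of open half-spaces, hence convex.

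Next I observe that $dF$ has rank $g$ at $A$ precisely when some $g \times g$ minor of the Jacobian matrix is nonzero at $A$. Each such minor is a real-analytic function on $\mathcal{U}'$, and the hypothesis supplies a particular minor $\Delta$ with $\Delta(A_0) \neq 0$. By the identity principle for real-analytic functions on a connected open subset of $\mathbb{R}^{2g}$, the vanishing locus of $\Delta$ then has empty interior, so
\[
W := \{A \in \mathcal{U}' : \Delta(A) \neq 0\}
\]
is open and dense in $\mathcal{U}'$; at every point of $W$ the map $F$ is a submersion.

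Finally, a submersion is locally an open map by the implicit function theorem. Given any nonempty open $V \subseteq \mathcal{U}'$, the density of $W$ supplies a point $A \in V \cap W$, and then a neighborhood $V' \subseteq V$ of $A$ on which $F$ is open. The image $F(V')$ is a nonempty open subset of $\mathbb{R}^g$, so it meets $\mathbb{Q}^g$; any preimage in $V'$ of a rational point lies in $V \cap F^{-1}(\mathbb{Q}^g)$, which proves density. The nontrivial analytic work (real-analyticity of $F$, and invertibility of $M$ on all of $\mathcal{U}'$) has already been carried out in the previous section, so no serious obstacle remains here; the genuinely hard step of the overall theorem is producing the witness point $A_0$ with $d(M^{-1}v)_{A_0}$ surjective, which I expect to be the content of the next section.
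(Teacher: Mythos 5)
Your proof is correct and follows essentially the same route as the paper: exploit real-analyticity of $M^{-1}v$ on the connected set $\mathcal{U}'$ to show the submersion locus is dense (the paper phrases this via lower semicontinuity of rank in the analytic Zariski topology, you phrase it via nonvanishing of a specific $g\times g$ minor and the identity principle — these are the same fact), then conclude by the implicit function theorem / openness of submersions. Your explicit observation that $\mathcal{U}'$ is convex, hence connected, is a pleasant clarification that the paper leaves implicit.
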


\begin{proof}
Let $S$ denote the closure of the preimage under $M^{-1} v$ of $\mathbb{Q}^g$.  We must show $S = \mathcal{U}'$.

First of all, if $A \in \mathcal{U}'$ is any point at which the Jacobian of $M^{-1} v$ is surjective, then by the implicit function theorem $A \in S$.

But $M^{-1} v$ is a real-analytic function on the connected set $\mathcal{U}'$.  The rank of the Jacobian is lower semicontinuous for the analytic Zariski topology, which is to say that the Jacobian has rank $g$ away from a proper analytic subset of $\mathcal{U}$.  Hence, $S$ contains the complement of a proper analytic subset of $\mathcal{U}'$.  But $S$ is closed for the classical topology, hence $S = \mathcal{U}'$.
\end{proof}

We remark in passing that the lemma holds true with $\mathbb{Q}^g$ replaced with any dense subset $X \subseteq \mathbb{R}^g$.  For example, one could take for $X$ the set of all rational numbers with denominator in any infinite set $S$ of positive integers.  The proof is the same.

Now we have reduced the problem to finding a single $A$ where the Jacobian has rank $g$.

\section{A Calculation on $\mathbb{P}^1$}
Choose real numbers
$$-1 < b_1 < b_2 < \cdots < b_g < 1$$
and set
$$f(x) = (x^2 - 1) \prod_i (x - b_i)^2.$$
This $f$ comes from $a \in \mathcal{U}'$ with
$$a_{2i-1} = a_{2i} = b_i$$
Let $\mathcal{V} \subseteq \mathcal{U}'$ denote the $g$-dimensional set of $f$ which arise in this way.

Geometrically, our hyperelliptic curve has degenerated to a $\mathbb{P}^1$ with $g$ nodes.  The global differentials have become differentials with poles on $\mathbb{P}^1$; their integrals turn out to be trigonometric functions.

The space of differentials is now the set of all
$$\frac{p(x) dx} { \left [ \prod_i (x-b_i) \right ] \sqrt{x^2 - 1}},$$
as $p$ ranges over polynomials of degree at most $g-1$.  Recall that the combination $M^{-1} v$ is independent of our choice of basis $\mathcal{B}$.  By partial fractions, we can choose for $\mathcal{B}$ the differentials
$$\omega_i = \frac{dx} {(x-b_i) \sqrt{x^2-1}}.$$

First we compute the ``period matrix.''  The differential $\omega_i$ is meromorphic away from $-1$ and $1$, with a pole only at $b_i$.  So, integrating along $\gamma_j$ for $j \neq i$ gives zero; while
$$\int_{\gamma_i} \omega_i = \frac{2 \pi} {\sqrt{1-b_i^2}}$$
by the residue theorem.  Hence, $M$ is the diagonal matrix with entries
$$\frac{2 \pi} {\sqrt{1-b_i^2}}$$
along the diagonal.

Next we need to compute the integrals
$$v_i = 2 \int_1^\infty \frac{dx} {(x-b_i) \sqrt{x^2-1}}.$$
The substitution $t = x - \sqrt{x^2 - 1}$ transforms the integral to
$$v_i = \int_0^1 \frac{4 dt} {(t^2 - 2b_i t + 1)},$$
which evaluates to
$$v_i = \frac{2 \arccos (-b_i)} {\sqrt{1 - b_i^2}}.$$

Hence, $M^{-1} v$ is the vector with $i$-th entry equal to
$$\frac{\arccos (-b_i)} {\pi}.$$

It follows that the Jacobian of the restriction of $M^{-1} v$ to $\mathcal{V}$ has full rank $g$ at every point in $\mathcal{V}$.  The result follows.

\section*{Acknowledgements}

Akshay Venkatesh suggested the use of Morera's theorem in the proof of Lemma \ref{anal}.  Thanks also to J-P.\ Serre, and to the anonymous referees, for valuable feedback on a draft of this paper.  The author was supported by the Hertz Foundation and the National Science Foundation.

A version of this paper appeared in print as \cite{Law}.

\end{document}